\newtheorem{theorem}{Theorem}
\newtheorem{definition}[theorem]{Definition}
\newtheorem{example}[theorem]{Example}
\newtheorem{lemma}[theorem]{Lemma}
\begin{document}

\title{{\LARGE \textbf{Optimal State Estimation Synthesis over Unreliable
Network in Presence of Denial-of-Service Attack: an Operator Framework
Approach}}}
\author{Mohammad Naghnaeian \thanks{%
M. Naghnaeian is with the Mechanical Engineering Department, Clemson
University, Clemson, SC, USA \texttt{\small mnaghna@clemson.edu}}}
\maketitle

\begin{abstract}
In this paper, we consider the problem of state-estimation in the presence
of Denial-of-Service (DoS) attack. We formulate this problem as an state
estimation problem for a plant with switching measured outputs. In the
absence of attack, the state-estimator has access to all measured outputs,
however, in the presence of attack, only a subset of all measurements are
made available to the state-estimator. We seek to find an state-estimator
that results in the minimum estimation error for the worst-case attack
strategy. First, we parameterize the set of all state-estimators that result
in stable estimation error for the worst-case attack scenario. Then, we will
show that any state-estimator in this set can be written as a generalized
Luenberger observer with an appropriately defined observer-gain. This
observer-gain, in general, can be an operator and possibly unbounded as
opposed to the classical static observer-gain. Furthermore, we will show
that finding the optimal state-estimator that results in the minimum
estimation error can be cast as a convex program over the set of stable
factors of the observer operator-gain. This optimization in, in fact, linear programming and tractable.
\end{abstract}

\section{Introduction}

Modern cyber-physical systems (CPS) typically consist of many smaller
components that are spread over a large spatial domain. The performance of
the system, in whole, depends on the synergistic integration of
computational components such as control or estimation algorithm and
physical components such as actuators or sensors. Connectivity to the
outside world and the critical nature of CPS has made such systems hot
targets for adversarial attacks, see e.g. \cite{powner2007critical} and \cite%
{eisenhauer2006roadmap}. Denial-of-Service attack is an adversarial attack
in which the attacker disrupts the exchange of the information \cite%
{cardenas2008secure}. In the control theoretic context, the disrupted
information could be sensor measurements or control inputs to the actuators.
In this paper, we seek to design state-estimators that are resilient with
respect to the DoS attacks on the measurement channels. Such a problem has
been given some attention in the literature, e.g., in \cite{amin2009safe}, 
\cite{gupta2010optimal}, and \cite{li2015jamming}. Most of the existing
results aims at optimizing a cost-function, which is a measure of estimation
error, over a finite horizon in the stochastic/probabilistic framework where
a distribution form for the attacker or transmitter is assumed. In this
paper, however, we address this problem in the deterministic framework and
infinite horizon objective.

Our perspective is to think of a DoS attack as a switch and model the system
as a Linear Switching System (LSS). The attacker's strategy is to choose the
switching to maximize the estimation error and possibly destabilize that
while having a complete knowledge about system. On the other hand, the
state-estimator's strategy is to minimize the estimation error based on the
available sensor measurements as well as the current and past actions of the
attacker. We first, parametrize the set of \textbf{all} state-estimators
that result in bounded estimation error. We refer to such state-estimators
as stable estimators. Then we will define a new class of state observers
mimicking the conventional Luenberger observers. We will refer to this new
class as \textit{generalized Luenberger observers}. A generalized Luenberger
observer, in form, is very similar to a classical Luenberger observer with a
significant difference that its observer gain is an operator, and possibly
an unstable one, as opposed to a static gain in the classical observer. By
allowing the Luenberger observe to have an operator-gain, we will show that
the set of generalized Luenberger observers capture all stable
state-estimators. This, by itself, is a new result to the best of our
knowledge. Then, in order to find the optimal observer resulting in he
minimum estimation error, we formulate the problem as a convex optimization
over the stable factors of the observer operator-gains. These factors, and
the resulting observer operator-gain, are switching operators that causally
depend on the switching sequence (attacker's strategy). Finding the optimal
state-estimator, in fact, can be cast as a linear program and hence is
tractable.

Our approach relies on utilizing the operator framework which was first
introduced and developed in \cite{naghnaeian2016characterization} and \cite%
{naghnaeian2018l_p}. This operator framework provides a powerful tool to
study any type of linear system, time invariant, time varying, delayed,
switching, etc., in a unified way. Recently, the author has used such a
framework for the synthesis of decentralized controllers \cite%
{naghnaeian2018unified}. In what follows, we first review some results on
the switching systems and their operator representation and then present our
results on optimal state-estimator design subject to DoS.

\section{Preliminaries}

\subsection{Generic Notation}

We use $\mathbb{R}^{n}$ for the set of vectors of real numbers of dimension $%
n$. Given $x=\left\{ x\left( k\right) \right\} _{k=1}^{n}\in \mathbb{R}^{n}$%
, its $l_{\infty }$ norm is defined as $\left\Vert x\right\Vert =\max_{k\in
\left\{ 0,1,...,n-1\right\} }\left\vert x\left( k\right) \right\vert $. For
a (infinite dimensional) sequence $x=\left\{ x\left( k\right) \right\}
_{k=0}^{\infty }$ with $x\left( k\right) \in \mathbb{R}^{n}$, the $l_{\infty
}$ norm is defined by $\left\Vert x\right\Vert =\sup_{k}\left\Vert x\left(
k\right) \right\Vert $ whenever finite. The space of sequences with elements
in $\mathbb{R}^{n}$ whose $l_{\infty }$ norm is bounded is denoted by $%
l_{\infty }^{n}$. Throughout this paper, we view linear systems as mapping
on the space of $l_{\infty }^{n}$, for some positive integer $n$. In
general, for two normed spaces $\left( X,\left\Vert .\right\Vert _{X}\right) 
$ and $\left( Y,\left\Vert .\right\Vert _{Y}\right) $ and a linear operator $%
R:X\rightarrow Y$, the$\ $induced norm of this operator is given by $%
\left\Vert R\right\Vert _{X-Y}:=\sup_{x\neq 0}\frac{\left\Vert Rx\right\Vert
_{Y}}{\left\Vert x\right\Vert _{X}}$. The operator $R$ is said to be bounded
if its induced norm is finite. In this paper, we typically have $X=l_{\infty
}^{n}$ and $Y=l_{\infty }^{m}$, for some positive integers $n$ and $m$, and
we simply write $\left\Vert R\right\Vert $ to denote the $l_{\infty }^{n}$
to $l_{\infty }^{m}$ induced norm of the operator $R$. Any linear causal
operator $R$ can be thought of as an infinite dimensional lower triangular
matrix, 
\begin{equation}
R=\left[ 
\begin{array}{cccc}
R_{0,0} & 0 & 0 & \cdots \\ 
R_{1,1} & R_{1,0} & 0 & \cdots \\ 
R_{2,2} & R_{2,1} & R_{2,0} &  \\ 
\vdots & \vdots &  & \ddots%
\end{array}%
\right] .  \label{eq:matrix}
\end{equation}

\begin{definition}
A causal operator $R$ given by (\ref{eq:matrix}), is said to be bounded or
stable (on the space of $l_{\infty }$ sequences) if 
\begin{equation*}
\sup_{k}\left\Vert \left[ 
\begin{array}{cccc}
\cdots & R_{k,2} & R_{k,1} & R_{k,0}%
\end{array}%
\right] \right\Vert <\infty .
\end{equation*}
\end{definition}

Given a sequence $x=\left\{ x\left( k\right) \right\} _{k=0}^{\infty }$, the
delay or shift operator $\Lambda $ is defined by%
\begin{equation*}
\Lambda ^{k}x=\left\{ \underset{k\text{ zeros}}{\underbrace{0,...,0}}%
,x\left( 0\right) ,x\left( 1\right) ,...\right\} .
\end{equation*}

\begin{definition}
A linear causal map $R$ is called time-invariant if $\Lambda R=R\Lambda $.
\end{definition}

A Linear Time-Invariant (LTI) operator $R$ is fully characterize by its
impulse response denoted by $\left\{ R\left( k\right) \right\}
_{k=0}^{\infty }$ and its infinite dimensional matrix representation is
given by%
\begin{equation*}
R=\left[ 
\begin{array}{cccc}
R\left( 0\right) & 0 & 0 & \cdots \\ 
R\left( 1\right) & R\left( 0\right) & 0 & \cdots \\ 
R\left( 2\right) & R\left( 1\right) & R\left( 0\right) &  \\ 
\vdots & \vdots &  & \ddots%
\end{array}%
\right] .
\end{equation*}%
A Linear Time-Varying (LTV) system $R$ can also be written in state-space
representation as 
\begin{equation}
R:\left\{ 
\begin{array}{c}
x\left( t+1\right) =A\left( t\right) x\left( t\right) +B\left( t\right)
w\left( t\right) \\ 
y\left( t\right) =C\left( t\right) x\left( t\right) +D\left( t\right)
w\left( t\right)%
\end{array}%
\right. ,\text{ with }x\left( t_{0}\right) =x_{0},  \label{eq:01'}
\end{equation}%
where $u\left( t\right) \in \mathbb{R}^{m},x\left( t\right) \in \mathbb{R}%
^{n},$ $y\left( t\right) \in \mathbb{R}^{p}$, and $x_{0}\in \mathbb{R}^{n}$
are input, state, output, and the initial condition of the system and $%
A\left( .\right) $, $B\left( .\right) $, $C\left( .\right) $, and $D\left(
.\right) $ are matrices with appropriate dimensions for all $t$. Throughout
this paper, we think of linear systems as operators and hence we do not
directly work with the state-space representation. We, rather, convert the
state-space (\ref{eq:01'}) to (\ref{eq:matrix}). To do so, given a sequence
of matrices $\left\{ A\left( k\right) \right\} _{k=0}^{\infty }$, we define $%
\bar{A}$ to be the diagonal operator%
\begin{equation}
\bar{A}=\left[ 
\begin{array}{ccc}
A\left( 0\right) & 0 & \cdots \\ 
0 & A\left( 1\right) &  \\ 
\vdots &  & \ddots%
\end{array}%
\right] .  \label{eq:diag}
\end{equation}%
Using this notation, we can define diagonal operators $\bar{A}$, $\bar{B}$, $%
\bar{C}$, and $\bar{D}$ and rewrite (\ref{eq:01'}) as

\begin{equation}
R:\left\{ 
\begin{array}{c}
x=\Lambda \bar{A}x+\Lambda \bar{B}w+\bar{x}_{0} \\ 
y=\bar{C}x+\bar{D}w%
\end{array}%
\right. ,  \label{eq:01''}
\end{equation}%
where $\bar{x}_{0}=\left\{ \underset{t_{0}\text{ zeros}}{\underbrace{0,...,0}%
},x_{0},0,0,...\right\} $, $x=\left\{ x\left( t\right) \right\}
_{t=0}^{\infty }$, $y=\left\{ y\left( t\right) \right\} _{t=0}^{\infty }$, $%
w=\left\{ w\left( t\right) \right\} _{t=0}^{\infty }$, and $\Lambda $ is the
delay operator. The above representation of $R$ is referred to as the 
\textit{operator form}.

\begin{definition}
\label{def:stab}System $R$ in (\ref{eq:01''}) is said to be stable or
bounded if it is a bounded operator from $\left( 
\begin{array}{c}
\bar{x}_{0} \\ 
w%
\end{array}%
\right) $ to $\left( 
\begin{array}{c}
x \\ 
y%
\end{array}%
\right) $. More precisely, $R$ is stable if there exists a nonnegative real
number $\gamma \geq 0$ such that $\max \left\{ \left\Vert x\right\Vert
,\left\Vert y\right\Vert \right\} \leq \gamma \max \left\{ \left\Vert \bar{x}%
_{0}\right\Vert +\left\Vert w\right\Vert \right\} $ for all $\bar{x}%
_{0},w\in l_{\infty }$.
\end{definition}

\subsection{Linear Switched Systems}

In this section, we need to review some standard results on Linear Switched
Systems (LSS) presented in \cite{naghnaeian2016characterization} and \cite%
{naghnaeian2018l_p}. A Linear Switched System, $P_{\sigma }$, can be
represented in state-space by%
\begin{equation}
P_{\sigma }:\left\{ 
\begin{array}{c}
x\left( t+1\right) =A_{\sigma \left( t\right) }x\left( t\right) +B_{\sigma
\left( t\right) }u\left( t\right) \\ 
y\left( t\right) =C_{\sigma \left( t\right) }x\left( t\right) +D_{\sigma
\left( t\right) }u\left( t\right)%
\end{array}%
\right. ,  \label{eq:LSS}
\end{equation}%
where $\sigma =\left\{ \sigma _{k}\right\} _{k=0}^{\infty }$ is called the
switching sequence that takes values a finite set. Sometimes, $\sigma $ is
restricted to be in the set of admissible switching sequences $\Xi $. In the
operator framework, (\ref{eq:LSS}) can be written as%
\begin{equation}
P_{\sigma }:\left\{ 
\begin{array}{c}
x=\Lambda \bar{A}_{\sigma }x+\Lambda \bar{B}_{\sigma }u+\bar{x}_{0} \\ 
y=\bar{C}_{\sigma }x+\bar{D}_{\sigma }u%
\end{array}%
\right. ,  \label{eq:LSS1}
\end{equation}%
where $\bar{A}_{\sigma }=diag\left( A_{\sigma \left( 0\right) },A_{\sigma
\left( 1\right) },A_{\sigma \left( 2\right) },...\right) $ and $\bar{B}%
_{\sigma }$, $\bar{C}_{\sigma }$, and $\bar{D}_{\sigma }$ are defined
analogously.

There are important sub-classes of LSS that are of interest in this paper.
These are the LSS whose state matrices, A-matrices, remain constant and are
defined below:

\begin{definition}
We say a LSS $P_{\sigma }$ is an input-output LSS of degree $M$, for some
positive integer $M$, if it can be written, in state-space, as follows%
\begin{equation}
P_{\sigma }:\left\{ 
\begin{array}{c}
x\left( t+1\right) =A_{\sigma \left( t\right) }x\left( t\right) +B_{\sigma
\left( t\right) }u\left( t\right) \\ 
y\left( t\right) =C_{\left\{ \sigma \left( k\right) \right\}
_{k=t-M+1}^{t}}x\left( t\right) +D_{\left\{ \sigma \left( k\right) \right\}
_{k=t-M+1}^{t}}u\left( t\right)%
\end{array}%
\right. .  \label{eq:IOLSS}
\end{equation}%
We will denote the class of such systems by $\mathcal{S}_{IO}^{M}$ and $%
\mathcal{S}_{IO}=\bigcup\limits_{M=1}^{\infty }\mathcal{S}_{IO}^{M}$.
\end{definition}

We are also interested in a subclass of input-output LSS, output-only
switching, as follows:

\begin{definition}
A LSS $P_{\sigma }$ is said to be an output-only LSS of degree $M$ if it
admits the realization%
\begin{equation}
P_{\sigma }:\left\{ 
\begin{array}{c}
x\left( t+1\right) =A_{\sigma \left( t\right) }x\left( t\right) +Bu\left(
t\right) \\ 
y\left( t\right) =C_{\left\{ \sigma \left( k\right) \right\}
_{k=t-M+1}^{t}}x\left( t\right) +D_{\left\{ \sigma \left( k\right) \right\}
_{k=t-M+1}^{t}}u\left( t\right)%
\end{array}%
\right. .
\end{equation}%
The class of such systems is denoted by $\mathcal{S}_{O}^{M}$ and $\mathcal{S%
}_{O}=\bigcup\limits_{M=1}^{\infty }\mathcal{S}_{O}^{M}$.
\end{definition}

The classes of input-output and output-only LSS are rich classes since any
stable LSS can be approximated by elements of $\mathcal{S}_{O}$ and $%
\mathcal{S}_{IO}$ with arbitrary accuracy.

\begin{lemma}
\label{lem:02}Let $P_{\sigma }$ be a stable LSS and $\varepsilon >0$. Then,
there exist an integer $M$, $\bar{P}_{\sigma }\in \mathcal{S}_{IO}^{M}$, and 
$\tilde{P}_{\sigma }\in \mathcal{S}_{O}^{M}$ such that%
\begin{eqnarray*}
\left\Vert P_{\sigma }-\bar{P}_{\sigma }\right\Vert &<&\varepsilon , \\
\left\Vert P_{\sigma }-\tilde{P}_{\sigma }\right\Vert &<&\varepsilon ,
\end{eqnarray*}%
for any switching sequence $\sigma $. Moreover, $\bar{P}_{\sigma }$ and $%
\tilde{P}_{\sigma }$ can be made FIR (Finite-Impulse-Response).
\end{lemma}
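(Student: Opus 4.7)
The plan is to obtain $\bar P_\sigma$ and $\tilde P_\sigma$ by truncating the Neumann-series representation of the state-to-output map of $P_\sigma$. Solving the state equation of (\ref{eq:LSS1}) for $x$ in operator form gives
\begin{equation*}
x = (I - \Lambda \bar A_\sigma)^{-1}(\Lambda \bar B_\sigma u + \bar x_0), \qquad y = \bar C_\sigma (I - \Lambda \bar A_\sigma)^{-1}(\Lambda \bar B_\sigma u + \bar x_0) + \bar D_\sigma u.
\end{equation*}
Since $\Lambda \bar A_\sigma$ is strictly lower triangular, formally $(I - \Lambda \bar A_\sigma)^{-1} = \sum_{k=0}^{\infty} (\Lambda \bar A_\sigma)^k$, and stability of $P_\sigma$ in the sense of Definition~\ref{def:stab} forces this series to converge in the $l_\infty$-induced operator norm, in particular giving $\|(\Lambda \bar A_\sigma)^k\|\to 0$.

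The key quantitative step is a uniform tail bound. I would invoke the uniform-exponential-stability characterization of stable LSS from \cite{naghnaeian2016characterization} and \cite{naghnaeian2018l_p} to obtain constants $K\ge 1$ and $\rho\in(0,1)$, independent of $\sigma$, with $\|(\Lambda\bar A_\sigma)^k\|\le K\rho^k$ and $\|\bar B_\sigma\|,\|\bar C_\sigma\|,\|\bar D_\sigma\|$ uniformly bounded. Given $\varepsilon>0$, I pick $M$ large enough that $\|\bar C_\sigma\|\cdot K\rho^M/(1-\rho)\cdot\max\{\|\bar B_\sigma\|,1\}<\varepsilon$ for every $\sigma$.

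I then define $\bar P_\sigma$ by replacing $(I-\Lambda\bar A_\sigma)^{-1}$ with its partial sum $\sum_{k=0}^{M-1}(\Lambda\bar A_\sigma)^k$. By construction, $\bar P_\sigma$ is FIR, and its output at time $t$ is the sum of $D_{\sigma(t)}u(t)$ and terms of the form $C_{\sigma(t)}A_{\sigma(t-1)}\cdots A_{\sigma(t-k)}B_{\sigma(t-k-1)}u(t-k-1)$ for $k=0,\ldots,M-2$, so it depends only on an $M$-window of $\sigma$ ending at $t$; this places $\bar P_\sigma$ in $\mathcal S_{IO}^M$ (after adjusting $M$ by a constant). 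The tail estimate above then yields $\|P_\sigma-\bar P_\sigma\|<\varepsilon$. To obtain $\tilde P_\sigma\in\mathcal S_O^M$ representing the \emph{same} FIR map, I would re-realize it via a tapped-delay-line whose state stores the last $M-1$ values of $u$ (and the shifted copy of $\bar x_0$). In this realization the $A$-matrix is the constant shift and the $B$-matrix is the constant injection into the first coordinate, so neither depends on $\sigma$; all switching is absorbed into $C$ and $D$, whose entries at time $t$ are precisely the products listed above, i.e.\ functions of a fixed-length window of $\sigma$, matching the definition of $\mathcal S_O^M$.

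The main obstacle is the uniform-in-$\sigma$ geometric decay of $\|(\Lambda\bar A_\sigma)^k\|$. Pointwise convergence of each individual Neumann series follows from stability, but selecting a single $M$ that simultaneously serves \emph{every} $\sigma\in\Xi$ requires uniform exponential stability, which is exactly the contribution of \cite{naghnaeian2016characterization} and \cite{naghnaeian2018l_p} that I would cite directly rather than re-derive. Once this uniform rate is in hand, the remainder reduces to a routine Neumann-tail estimate followed by the shift-register realization that relocates the entire switching dependence into the output matrices.
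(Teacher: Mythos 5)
The paper never proves this lemma: it is presented as part of a review of results imported from \cite{naghnaeian2016characterization} and \cite{naghnaeian2018l_p}, so there is no in-paper argument to compare against, and your Neumann-tail truncation plus tapped-delay-line re-realization is the natural constructive route. The structure is sound: the truncated map is FIR, its output at time $t$ is built from the products $C_{\sigma (t)}A_{\sigma (t-1)}\cdots A_{\sigma (t-k)}B_{\sigma (t-k-1)}$ and therefore depends only on a fixed-length window of $\sigma $, and the shift-register realization with constant $A$ and $B$ places the approximant in $\mathcal{S}_{O}^{M}\subseteq \mathcal{S}_{IO}^{M}$, covering both claims at once. One step is asserted more strongly than it holds: for a \emph{single} switching sequence, boundedness of $\left( I-\Lambda \bar{A}_{\sigma }\right) ^{-1}$ on $l_{\infty }$ only gives $\sup_{t}\sum_{k}\left\Vert A_{\sigma (t-1)}\cdots A_{\sigma (t-k)}\right\Vert <\infty $, which does not by itself force $\left\Vert \left( \Lambda \bar{A}_{\sigma }\right) ^{k}\right\Vert =\sup_{t}\left\Vert A_{\sigma (t-1)}\cdots A_{\sigma (t-k)}\right\Vert \rightarrow 0$, since the large terms can migrate in $t$ as $k$ grows. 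What rescues the argument is exactly the uniformity over a shift-invariant, concatenation-closed $\Xi $ that you then invoke: if $\sup_{\sigma \in \Xi }\left\Vert \left( I-\Lambda \bar{A}_{\sigma }\right) ^{-1}\right\Vert \leq \Gamma $, then every admissible product of $A$-matrices has norm at most $\Gamma $ and every admissible word of length $N>2\Gamma ^{2}$ contains a left factor of norm at most $1/(2\Gamma )$, so $\left\Vert A_{i_{1}}\cdots A_{i_{N}}\right\Vert \leq 1/2$ uniformly and the geometric bound $K\rho ^{k}$ follows by submultiplicativity. So the ingredient you defer to the references is genuinely available, and even elementary, but you should either make uniform stability over $\Xi $ the explicit meaning of ``stable LSS'' or run this subdivision argument, rather than attributing the decay to stability of each individual $\sigma $.
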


Furthermore, there exist tractable and exact expressions to calculate the $%
l_{\infty }$ induced norm of LSS. In \cite{naghnaeian2018l_p}, it is proved
that the gain computation can be cast as a Linear Program. We do not review
those results here but rather refer the reader to \cite{naghnaeian2018l_p}%
.\bigskip

\section{Problem Setup}

Consider a linear plant given by%
\begin{eqnarray}
x &=&\Lambda \bar{A}x+\Lambda \bar{B}w+\bar{x}_{0},  \notag \\
y_{i} &=&\bar{C}_{i}x+\bar{D}_{i}w,  \label{eq:plant}
\end{eqnarray}%
where $x$ and $w$ are the states and exogenous disturbances, respectively,
and $y_{i}$'s, for $i=1,2,...,N$ for some integer $N$, are the
measurements/observations from this system. In this paper, we address the
problem of remote state-estimation where some of the measurements, $y_{i}$%
's, might not be available to the state-estimator due to intermittent
communication network or Denial-of-Service type of attack. In the ideal
nominal operating condition, when there is no DoS attack, the
state-estimator receives all $y_{i}$'s. That is, available information to
the state-estimator, $y_{a}$, is given by%
\begin{equation}
y_{a}^{0}=\left[ 
\begin{array}{c}
y_{1} \\ 
y_{2} \\ 
\vdots \\ 
y_{N}%
\end{array}%
\right] =\left[ 
\begin{array}{c}
\bar{C}_{1} \\ 
\bar{C}_{2} \\ 
\vdots \\ 
\bar{C}_{N}%
\end{array}%
\right] x+\left[ 
\begin{array}{c}
\bar{D}_{1} \\ 
\bar{D}_{2} \\ 
\vdots \\ 
\bar{D}_{N}%
\end{array}%
\right] w.  \label{eq:y0}
\end{equation}%
However, when a DoS attack occurs at the measurement channel, the
state-estimator only receives a subset of measurements. In this case,%
\begin{equation}
y_{a}^{\sigma \left( t\right) }\left( t\right) =E^{\sigma \left( t\right)
}y_{a}^{0}(t),  \label{eq:05}
\end{equation}%
where $E^{\sigma \left( t\right) }$ is a block diagonal matrix with identity
corresponding to $y_{i}$'s that are available to state-estimator and zero
otherwise. In the above expression, $\sigma \left( .\right) $ is the
switching signal orchestrating between modes of the system and take value in
some finite set. We use the zeroth mode to denote the nominal mode. For a
concrete example see below: 
\begin{figure}[t]
\centering
\par
\includegraphics[trim= 4.5in 4in 4in 0.5in,clip,width=\linewidth]{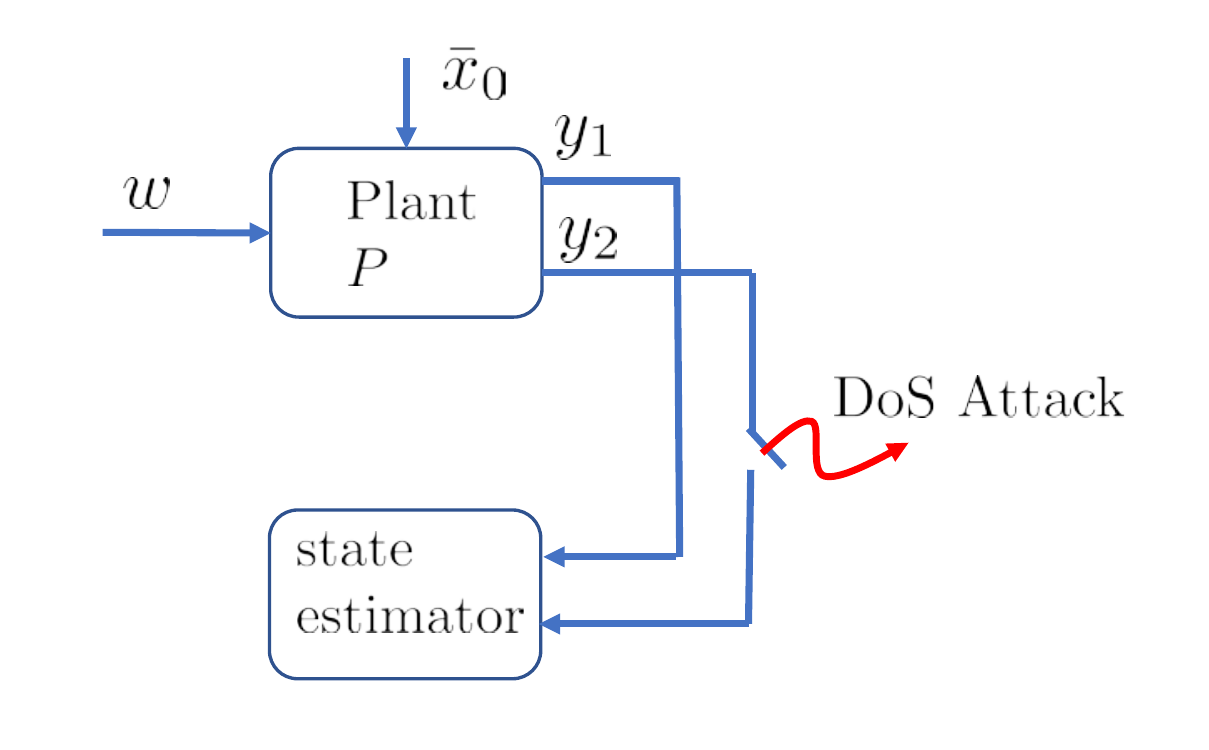}
\caption{DoS attack only on $y_{2}$ channel.}
\label{fig:1}
\end{figure}

\begin{example}
\label{exm:01}Consider a system in Figure \ref{fig:1} where there are two
measurements, $y_{1}$ and $y_{2}$. Suppose, the plant is \textbf{unstable}
LTI given by%
\begin{equation*}
x\left( t+1\right) =\left[ 
\begin{array}{ccc}
1 & 0 & 1 \\ 
-1 & 1 & 1 \\ 
-1 & 0 & 2%
\end{array}%
\right] x(t),x(0)=\left[ 
\begin{array}{c}
0.1 \\ 
0.2 \\ 
-0.1%
\end{array}%
\right] ,
\end{equation*}%
and the measurements are given by%
\begin{eqnarray*}
y_{1}\left( t\right) &=&\left[ 
\begin{array}{ccc}
0 & 1 & 0%
\end{array}%
\right] x(t)+2w_{1}\left( t\right) , \\
y_{2}\left( t\right) &=&\left[ 
\begin{array}{ccc}
1 & -1 & -2%
\end{array}%
\right] x(t)+0.01w_{2}(t),
\end{eqnarray*}%
where $\left\vert w_{i}\left( t\right) \right\vert \leq 1$, for $i=1,2$. In
this example, $y_{1}$ is a reliable measurement but with higher level of
disturbance and $y_{2}$ is an unreliable measurement with lower level of
disturbance. The DoS type of attack may result in measurement $y_{2}$ to not
reach the state-estimator. In this case, the available information, at each
time instant, to the state-estimator, $y_{a}\left( t\right) $ is given by (%
\ref{eq:05}), where 
\begin{equation*}
y_{a}^{0}\left( t\right) =\left[ 
\begin{array}{c}
y_{1}\left( t\right) \\ 
y_{2}\left( t\right)%
\end{array}%
\right] ,
\end{equation*}%
and%
\begin{equation*}
E^{\sigma \left( t\right) }\in \left\{ \left[ 
\begin{array}{cc}
I & 0 \\ 
0 & I%
\end{array}%
\right] ,\left[ 
\begin{array}{cc}
I & 0 \\ 
0 & 0%
\end{array}%
\right] \right\} .
\end{equation*}%
Therefore, $y_{a}^{\sigma \left( t\right) }=C^{\sigma \left( t\right)
}x+D^{\sigma \left( t\right) }w$ where%
\begin{eqnarray}
C^{\sigma \left( t\right) } &\in &\left\{ \left[ 
\begin{array}{ccc}
0 & 1 & 0 \\ 
1 & -1 & -2%
\end{array}%
\right] ,\left[ 
\begin{array}{ccc}
0 & 1 & 0 \\ 
0 & 0 & 0%
\end{array}%
\right] \right\} ,  \label{eq:C} \\
D^{\sigma \left( t\right) } &\in &\left\{ \left[ 
\begin{array}{cc}
2 & 0 \\ 
0 & 0.01%
\end{array}%
\right] ,\left[ 
\begin{array}{cc}
2 & 0 \\ 
0 & 0%
\end{array}%
\right] \right\} .  \label{eq:D}
\end{eqnarray}
\end{example}

Similarly to this example, we can rewrite (\ref{eq:05}) in the operator
framework and combine with (\ref{eq:plant})-(\ref{eq:y0}) to obtain the
following plant and attack model: 
\begin{eqnarray*}
x &=&\Lambda \bar{A}x+\Lambda \bar{B}w+\bar{x}_{0}, \\
y_{a}^{\sigma } &=&\bar{C}^{\sigma }x+\bar{D}^{\sigma }w,
\end{eqnarray*}%
where the switching sequence $\sigma \left( .\right) $ is the attacker's
strategy; we use $\sigma \left( t\right) =0$ to denote the nominal condition
at time instant $t$. In this expression, $y_{a}^{\sigma }$ is the sequence
of available information to state-estimator and $\sigma $ belongs to the set
of admissible sequences $\Xi $. In the above example, $\Xi $ is the set of
binary sequences.

\section{Main Results}

\subsection{Parametrization of State-Estimators}

In this section, we are interested to parametrize the set of
state-estimators. A state-estimator is a causal map, $T^{\sigma }$, from the
available measurements, $y_{a}^{\sigma }$, to a signal $\hat{x}$ which is
the estimation of state $x$. That is,%
\begin{equation}
\hat{x}=T^{\sigma }y_{a}^{\sigma }.  \label{eq:SE}
\end{equation}%
In the above expression, the dependency of the state-estimator on $\sigma $
is made explicit. We emphasize that $\sigma \left( .\right) $ is the
attacker's strategy which is causally known to the state-estimator. That is,
the state-estimator, at any given time, does not know the attacker's
intention in future but know its current and past actions. Therefore, $%
T^{\sigma }$ only causally depends on $\sigma $. In fact, a generic LSS as
given in (\ref{eq:LSS}) respects this causality. Henceforth, whenever an
operator's dependency on $\sigma $ is stated, causal dependency is assumed.

\begin{definition}
We say an state-estimator (\ref{eq:SE}) is stable if the estimation error $%
\tilde{x}:=\hat{x}-x$ is a bounded signal.\bigskip
\end{definition}

In the sequel, we first parametrize the set of all stable state-estimators
and then we will present our result on the synthesis of optimal
state-estimator that is resilient the DoS attacks.

\begin{lemma}
\label{lem:01}The set of all stable state-estimators (\ref{eq:SE}) that
result in a bounded estimation error is parametrized by bounded operators $%
T^{\sigma }$ and $X^{\sigma }$ such that%
\begin{equation}
\left[ 
\begin{array}{cc}
T^{\sigma } & X^{\sigma }%
\end{array}%
\right] \left[ 
\begin{array}{c}
\bar{C}^{\sigma } \\ 
\Lambda \bar{A}-I%
\end{array}%
\right] =I,\text{ for all }\sigma \in \Xi \text{.}  \label{eq:01}
\end{equation}%
In this case, the state-estimator and estimation error are given by (\ref%
{eq:SE}) and%
\begin{equation*}
\tilde{x}=X^{\sigma }\left( \Lambda Bw+\bar{x}_{0}\right) .
\end{equation*}
\end{lemma}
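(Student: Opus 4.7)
The plan is to prove the parametrization by establishing both directions of the claim separately.

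The sufficient direction is essentially algebraic. Given bounded $T^{\sigma}$ and $X^{\sigma}$ satisfying (\ref{eq:01}), I substitute the measurement equation into (\ref{eq:SE}) to obtain
\[
\tilde{x} = \hat{x} - x = (T^{\sigma}\bar{C}^{\sigma} - I)x + T^{\sigma}\bar{D}^{\sigma} w.
\]
Rearranging (\ref{eq:01}) gives $T^{\sigma}\bar{C}^{\sigma} - I = X^{\sigma}(I - \Lambda\bar{A})$, and the plant dynamics yield $(I - \Lambda\bar{A})x = \Lambda\bar{B} w + \bar{x}_{0}$. Combining these produces $\tilde{x} = X^{\sigma}(\Lambda\bar{B} w + \bar{x}_{0}) + T^{\sigma}\bar{D}^{\sigma} w$, which is bounded whenever $w$ and $\bar{x}_{0}$ are, because every operator on the right-hand side is bounded.

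For the necessary direction, given a stable estimator $T^{\sigma}$, I would construct $X^{\sigma}$ explicitly as
\[
X^{\sigma} := (T^{\sigma}\bar{C}^{\sigma} - I)(I - \Lambda\bar{A})^{-1},
\]
where $(I - \Lambda\bar{A})^{-1}$ is the formal Neumann series $\sum_{k=0}^{\infty}(\Lambda\bar{A})^{k}$. Because $\Lambda\bar{A}$ is strictly causal (it vanishes on the main diagonal of its matrix representation in (\ref{eq:matrix})), each output coordinate of this series depends on only finitely many past inputs, so the definition is rigorous as a causal operator on sequences, even though the inverse itself need not be bounded. By construction $X^{\sigma}(\Lambda\bar{A} - I) = I - T^{\sigma}\bar{C}^{\sigma}$, so (\ref{eq:01}) holds automatically.

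The main obstacle is showing that this $X^{\sigma}$ is bounded as an operator on $l_{\infty}$, exactly in the interesting case when the plant is unstable and $(I - \Lambda\bar{A})^{-1}$ on its own fails to be bounded. I would give $X^{\sigma}$ an operational interpretation: for any $v \in l_{\infty}$, the sequence $x_{v} := (I - \Lambda\bar{A})^{-1}v$ is the state of the auxiliary plant $x_{v} = \Lambda\bar{A} x_{v} + v$, and $X^{\sigma}v = (T^{\sigma}\bar{C}^{\sigma} - I)x_{v}$ is precisely the estimation error that $T^{\sigma}$ produces when applied to the output $\bar{C}^{\sigma} x_{v}$ of this auxiliary plant. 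Since any $v \in l_{\infty}$ can be realized in the form $\Lambda w + \bar{x}_{0}$ (absorbing $v(0)$ into an initial condition and shifting the tail into a disturbance acting through $\bar{B} = I$), this lies within the disturbance class over which $T^{\sigma}$ is assumed stable; the uniform error bound then transfers to a uniform bound on $\|X^{\sigma}v\|$ in terms of $\|v\|$, giving the required boundedness of $X^{\sigma}$ and completing the parametrization.
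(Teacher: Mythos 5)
Your overall route is the same as the paper's: both directions rest on the identity $T^{\sigma }\bar{C}^{\sigma }-I=X^{\sigma }\left( I-\Lambda \bar{A}\right) $ together with the plant relation $\left( I-\Lambda \bar{A}\right) x=\Lambda \bar{B}w+\bar{x}_{0}$, and your choice $X^{\sigma }:=\left( T^{\sigma }\bar{C}^{\sigma }-I\right) \left( I-\Lambda \bar{A}\right) ^{-1}$ is exactly the paper's. You go further than the paper in one respect: the paper simply asserts that boundedness of the error for all bounded inputs is equivalent to boundedness of $T^{\sigma }$ and of $X^{\sigma }$, whereas you try to actually prove the ``only if'' half by exhibiting every $v\in l_{\infty }$ as an admissible excitation of the error map. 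That is the right thing to worry about, and your observation that $\left( I-\Lambda \bar{A}\right) ^{-1}$ exists as a causal (if unbounded) operator via the Neumann series of the strictly causal $\Lambda \bar{A}$ is correct and worth making explicit.

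However, the specific realization argument you give for that step would fail. You write $v=\Lambda w+\bar{x}_{0}$ with ``the tail acting through $\bar{B}=I$,'' but $\bar{B}$ is fixed by the plant (\ref{eq:plant}) and need not be right-invertible; in the paper's own Example \ref{exm:01} one has $B=0$, so no part of $v$ can be injected through the disturbance channel, and the reachable set $\left\{ \Lambda \bar{B}w+\bar{x}_{0}\right\} $ with $\bar{x}_{0}$ restricted to a single impulse at $t_{0}$ is far from all of $l_{\infty }$. The repair is to route $v$ entirely through the initial-condition channel: the paper's stability notion (Definition \ref{def:stab} and the phrase ``for bounded $w$, $\bar{x}_{0}$'' in the proof of the lemma) treats $\bar{x}_{0}$ as an arbitrary element of $l_{\infty }$, so one may take $w=0$ and $\bar{x}_{0}=v$, whence $X^{\sigma }v$ is precisely the resulting estimation error and must be bounded; a standard gliding-hump argument for causal operators on $l_{\infty }$ then upgrades ``bounded output for every bounded input'' to a finite induced norm. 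Two minor points: your derivation correctly retains the term $T^{\sigma }\bar{D}^{\sigma }w$ in the error (the displayed formula in the lemma omits it, but the paper's own proof keeps it, so you are consistent with the proof rather than the statement); and the ``if and only if'' in the paper also tacitly uses that $T^{\sigma }$ itself must be bounded, which follows by the same kind of richness argument applied to the term $T^{\sigma }\bar{D}^{\sigma }w$ and is worth a sentence if you want the necessity direction to be complete.
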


\begin{proof}
Let the state-estimator given by (\ref{eq:SE}). Then, the error is given by%
\begin{eqnarray*}
\tilde{x} &=&T^{\sigma }\left( \bar{C}^{\sigma }x+\bar{D}^{\sigma }w\right)
-x=\left( T^{\sigma }\bar{C}^{\sigma }-I\right) x+T^{\sigma }\bar{D}^{\sigma
}w \\
&=&\left( T^{\sigma }\bar{C}^{\sigma }-I\right) \left( I-\Lambda \bar{A}%
\right) ^{-1}\Lambda Bw \\
&&+T^{\sigma }\bar{D}^{\sigma }w+\left( T^{\sigma }\bar{C}^{\sigma
}-I\right) \left( I-\Lambda \bar{A}\right) ^{-1}\bar{x}_{0}.
\end{eqnarray*}%
Notice that $\tilde{x}$ is a bounded signal for bounded $w$, $\bar{x}_{0}$,
and $\sigma \in \Xi $ if and only if the mappings $T^{\sigma }$ and $\left(
T^{\sigma }\bar{C}^{\sigma }-I\right) \left( I-\Lambda \bar{A}\right) ^{-1}$
are bounded. Define%
\begin{equation*}
X^{\sigma }:=\left( T^{\sigma }\bar{C}^{\sigma }-I\right) \left( I-\Lambda 
\bar{A}\right) ^{-1}.
\end{equation*}%
Post-multiplying both sides by $\left( I-\Lambda \bar{A}\right) $, we obtain%
\begin{equation*}
X^{\sigma }\left( I-\Lambda \bar{A}\right) =\left( T^{\sigma }\bar{C}%
^{\sigma }-I\right) ,
\end{equation*}%
which is equivalent to (\ref{eq:01}) and this completes the proof.
\end{proof}

\bigskip

Traditionally, the state-estimation has been carried out utilizing
Luenberger observers. Luenberger observers, in their conventional shape,
form a strict subset of all stable state-estimators parametrized above. In
what follows, we introduce the \textit{Generalized Luenberger Observers }%
that differ from conventional ones in that their observer gains are
(possibly unstable) operators as opposed to static. A generalized Luenberger
is of the form%
\begin{equation}
\hat{x}=\Lambda \bar{A}\hat{x}+L^{\sigma }\left( \bar{C}^{\sigma }\hat{x}%
-y_{a}^{\sigma }\right) ,  \label{eq:GLO}
\end{equation}%
where $\hat{x}$ is the estimation of the state, $L$ is the observer
(possibly unbounded )operator-gain, and $y_{a}^{\sigma }$ is the available
information to state-estimator.

\begin{theorem}
\label{thm:01}Any stable state-estimator can be written as in (\ref{eq:GLO})
for an appropriate $L^{\sigma }$ in the form%
\begin{equation}
L^{\sigma}=\left( I+Q^{\sigma }\right) ^{-1}Z^{\sigma },  \label{eq:L}
\end{equation}%
where $Q^{\sigma}$ and $Z^{\sigma}$ are stable operators satisfying%
\begin{equation}
\sup_{\sigma \in \Xi }\left\{ \Lambda \bar{A}+\left[ 
\begin{array}{cc}
Z^{\sigma } & Q^{\sigma }%
\end{array}%
\right] \left[ 
\begin{array}{c}
\bar{C}^{\sigma } \\ 
\Lambda \bar{A}-I%
\end{array}%
\right] \right\} =0.  \label{eq:cond1}
\end{equation}%
Conversely, any generalized Luenberger observer (\ref{eq:GLO}) with observer
operator-gain $L$ in (\ref{eq:L}) is a stable state-estimator if%
\begin{equation}
\sup_{\sigma \in \Xi }\left\Vert \Lambda \bar{A}+\left[ 
\begin{array}{cc}
Z^{\sigma } & Q^{\sigma }%
\end{array}%
\right] \left[ 
\begin{array}{c}
\bar{C}^{\sigma } \\ 
\Lambda \bar{A}-I%
\end{array}%
\right] \right\Vert <1.  \label{eq:cond2}
\end{equation}
\end{theorem}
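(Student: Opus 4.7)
The plan is to prove the two directions separately; both rest on the algebraic trick of pre-multiplying the implicit Luenberger equation by $(I+Q^\sigma)$ so that the (possibly unbounded) operator $L^\sigma$ appears only in the combination $(I+Q^\sigma)L^\sigma=Z^\sigma$.

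For the forward direction, I start from a stable estimator $\hat{x}=T^\sigma y_a^\sigma$ and invoke Lemma~\ref{lem:01} to produce a bounded $X^\sigma$ satisfying $T^\sigma\bar{C}^\sigma+X^\sigma(\Lambda\bar{A}-I)=I$. I then set
\[
Z^\sigma:=-T^\sigma,\qquad Q^\sigma:=-I-X^\sigma,
\]
both bounded. Plugging these into the operator appearing in (\ref{eq:cond1}) gives $\Lambda\bar{A}-T^\sigma\bar{C}^\sigma+(-I-X^\sigma)(\Lambda\bar{A}-I)=I-[T^\sigma\bar{C}^\sigma+X^\sigma(\Lambda\bar{A}-I)]=0$, verifying the constraint. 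To see that the resulting generalized Luenberger observer reproduces $\hat{x}=T^\sigma y_a^\sigma$, I avoid writing $L^\sigma$ explicitly and instead pre-multiply the observer equation $(I-\Lambda\bar{A}-L^\sigma\bar{C}^\sigma)\hat{x}=-L^\sigma y_a^\sigma$ by $(I+Q^\sigma)$. Using $(I+Q^\sigma)L^\sigma=Z^\sigma$ and the just-verified (\ref{eq:cond1}), the left-hand side collapses to $\hat{x}$ and the right-hand side becomes $-Z^\sigma y_a^\sigma=T^\sigma y_a^\sigma$, as required.

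For the converse direction, I form the error dynamics by subtracting the plant equations from the observer equation to obtain
\[
(I-\Lambda\bar{A}-L^\sigma\bar{C}^\sigma)\tilde{x}=-L^\sigma\bar{D}^\sigma w-\Lambda\bar{B}w-\bar{x}_{0}.
\]
Pre-multiplying by $(I+Q^\sigma)$ and once more collapsing $(I+Q^\sigma)L^\sigma$ to $Z^\sigma$, the left-hand side becomes $(I-M^\sigma)\tilde{x}$, where $M^\sigma$ is exactly the operator whose norm appears in (\ref{eq:cond2}), while the right-hand side equals $-Z^\sigma\bar{D}^\sigma w-(I+Q^\sigma)(\Lambda\bar{B}w+\bar{x}_{0})$, which is bounded whenever $w$ and $\bar{x}_{0}$ are. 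Condition (\ref{eq:cond2}) gives $\sup_\sigma\|M^\sigma\|<1$, so the Neumann series $\sum_{k\ge 0}(M^\sigma)^{k}$ furnishes a bounded inverse of $(I-M^\sigma)$ whose norm is uniformly controlled over $\sigma\in\Xi$; boundedness of $\tilde{x}$ follows immediately.

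The main obstacle is conceptual rather than computational: in the forward direction, $(I+Q^\sigma)$ is not guaranteed to be boundedly invertible, so $L^\sigma=(I+Q^\sigma)^{-1}Z^\sigma$ should be interpreted as a coprime-factor representation of a generally unbounded observer gain rather than as a bounded operator in its own right. The generalized Luenberger equation must accordingly be read through its pre-multiplication by $(I+Q^\sigma)$, which keeps everything expressed in terms of the bounded factors $Z^\sigma$ and $Q^\sigma$. Once this viewpoint is adopted, both directions reduce to routine operator manipulations based on Lemma~\ref{lem:01} and a Neumann-series argument.
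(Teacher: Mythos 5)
Your proposal is correct and follows essentially the same route as the paper's proof: the same choice $Z^{\sigma}=-T^{\sigma}$, $Q^{\sigma}=-I-X^{\sigma}$ built from Lemma~\ref{lem:01} for the forward direction, and the same identification of the error dynamics with the operator in (\ref{eq:cond2}) followed by a small-gain/Neumann-series inversion for the converse. Your extra step of pre-multiplying by $(I+Q^{\sigma})$ to check that the generalized Luenberger observer actually reproduces $\hat{x}=T^{\sigma}y_{a}^{\sigma}$, and your remark that $L^{\sigma}$ must be read through its bounded factors since $(I+Q^{\sigma})$ need not be boundedly invertible, are welcome clarifications of points the paper leaves implicit, but they do not change the underlying argument.
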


\begin{proof}
Suppose 
\begin{equation*}
\hat{x}=T^{\sigma }y_{a}^{\sigma },
\end{equation*}%
is a stable state-estimator for all $\sigma \in \Xi $. By Lemma \ref{lem:01}%
, $T^{\sigma }$ must be bounded and there exists a bounded operator $%
X^{\sigma }$ such that (\ref{eq:01}) holds. Now, define $Z^{\sigma }$ and $%
Q^{\sigma }$ as follows:%
\begin{eqnarray*}
Z^{\sigma } &:&=-T^{\sigma }, \\
Q^{\sigma } &=&-I-X^{\sigma }.
\end{eqnarray*}%
Then, direct calculation verifies%
\begin{eqnarray*}
&&\Lambda \bar{A}+\left[ 
\begin{array}{cc}
Z^{\sigma } & Q^{\sigma }%
\end{array}%
\right] \left[ 
\begin{array}{c}
\bar{C}^{\sigma } \\ 
\Lambda \bar{A}-I%
\end{array}%
\right] \\
&=&\Lambda \bar{A}+\left[ 
\begin{array}{cc}
-T^{\sigma } & -I-X^{\sigma }%
\end{array}%
\right] \left[ 
\begin{array}{c}
\bar{C}^{\sigma } \\ 
\Lambda \bar{A}-I%
\end{array}%
\right] \\
&=&\Lambda \bar{A}-T\bar{C}^{\sigma }+I-\Lambda \bar{A}-X^{\sigma }\Lambda 
\bar{A}+X^{\sigma } \\
&=&-T\bar{C}^{\sigma }+I+X^{\sigma }\left( I-\Lambda \bar{A}\right) \\
&=&-\left[ 
\begin{array}{cc}
T^{\sigma } & X^{\sigma }%
\end{array}%
\right] \left[ 
\begin{array}{c}
\bar{C}^{\sigma } \\ 
\Lambda \bar{A}-I%
\end{array}%
\right] +I \\
&=&0,
\end{eqnarray*}%
which implies (\ref{eq:cond1}) is satisfied. Therefore, any stable
state-estimator can be written as a generalized Luenberger observer. It
remains to show the converse. That is, any generalized Luenberger observer (%
\ref{eq:GLO}) with observer operator-gain (\ref{eq:L}) and (\ref{eq:cond2})
results in a stable state-estimator. Given a generalized Luenberger observer
(\ref{eq:GLO}), its estimation error is given by%
\begin{eqnarray}
e &=&\hat{x}-x=\Lambda \bar{A}\hat{x}+L^{\sigma }\left( \bar{C}^{\sigma }%
\hat{x}-y_{a}^{\sigma }\right) -\Lambda \bar{A}x-\Lambda \bar{B}w-\bar{x}_{0}
\notag \\
&=&\left( \Lambda \bar{A}+L^{\sigma }\bar{C}^{\sigma }\right) e-L^{\sigma }%
\bar{D}^{\sigma }w-\Lambda \bar{B}w-\bar{x}_{0}.  \label{eq:03}
\end{eqnarray}%
Assuming (\ref{eq:L})-(\ref{eq:cond1}), there exists a bounded operator $%
\mathcal{E}^{\sigma }$ with $\left\Vert \mathcal{E}^{\sigma }\right\Vert <1$
such that%
\begin{eqnarray}
\mathcal{E}^{\sigma } &\mathcal{=}&\Lambda \bar{A}+\left[ 
\begin{array}{cc}
Z^{\sigma } & Q^{\sigma }%
\end{array}%
\right] \left[ 
\begin{array}{c}
\bar{C}^{\sigma } \\ 
\Lambda \bar{A}-I%
\end{array}%
\right]  \label{eq:06} \\
&=&\left( I+Q^{\sigma }\right) \Lambda \bar{A}+Z\bar{C}^{\sigma }-Q^{\sigma }
\notag \\
&=&\left( I+Q^{\sigma }\right) \Lambda \bar{A}+\left( I+Q^{\sigma }\right)
L^{\sigma }\bar{C}^{\sigma }-Q^{\sigma }.  \notag
\end{eqnarray}%
Therefore,%
\begin{equation*}
\Lambda \bar{A}+L^{\sigma }\bar{C}^{\sigma }=\left( I+Q^{\sigma }\right)
^{-1}\left( Q^{\sigma }+\mathcal{E}^{\sigma }\right) .
\end{equation*}%
Using this expression in (\ref{eq:03}), we obtain%
\begin{eqnarray}
e &=&-\left\{ I-\left( \Lambda \bar{A}+L^{\sigma }\bar{C}^{\sigma }\right)
\right\} ^{-1}\left\{ L\bar{D}^{\sigma }w+\Lambda \bar{B}w+\bar{x}%
_{0}\right\}  \label{eq:07} \\
&=&-\left\{ I-\mathcal{E}^{\sigma }\right\} ^{-1}\left\{ Z^{\sigma }\bar{D}%
^{\sigma }w+\left( I+Q^{\sigma }\right) \Lambda \bar{B}w+\left( I+Q^{\sigma
}\right) \bar{x}_{0}\right\} .  \notag
\end{eqnarray}%
Notice that, since $\left\Vert \mathcal{E}^{\sigma }\right\Vert <1$, we have
that%
\begin{equation*}
\left\Vert \mathcal{E}^{\sigma }\left\{ I-\mathcal{E}^{\sigma }\right\}
^{-1}\right\Vert \leq \frac{\left\Vert \mathcal{E}^{\sigma }\right\Vert }{%
1-\left\Vert \mathcal{E}^{\sigma }\right\Vert },
\end{equation*}%
and hence the error, $e$, in the above expression is a bounded signal. In
fact,%
\begin{eqnarray*}
\left\Vert e\right\Vert &\leq &\frac{1}{1-\left\Vert \mathcal{E}\right\Vert }%
\times \\
&&\left\{ \left\Vert \left( I+Q^{\sigma }\right) \Lambda \bar{B}+Z^{\sigma }%
\bar{D}^{\sigma }\right\Vert \left\Vert w\right\Vert +\left\Vert \left(
I+Q^{\sigma }\right) \right\Vert \left\Vert \bar{x}_{0}\right\Vert \right\} .
\end{eqnarray*}%
This implies that any generalized Luenberger observer, with (\ref{eq:GLO})
and (\ref{eq:L})-(\ref{eq:cond1}), is a stable state-estimator and completes
the proof.
\end{proof}

\subsection{Optimal State-Estimator}

In this part, we present a resilient state-estimation design based on
Theorem \ref{thm:01}. We are interested to find the optimal state-estimator
such that the estimation error is minimized. According to Theorem \ref%
{thm:01}, any stable state-estimator can be written as%
\begin{equation*}
\hat{x}=\Lambda \bar{A}\hat{x}+L^{\sigma }\left( \bar{C}^{\sigma }\hat{x}%
-y_{a}^{\sigma }\right) ,
\end{equation*}%
where, given $\varepsilon \in \lbrack 0,1)$, there exist stable $Q^{\sigma }$
and $Z^{\sigma }$ such that 
\begin{eqnarray}
&&L^{\sigma }=\left( I+Q^{\sigma }\right) ^{-1}Z^{\sigma },  \notag \\
&&\sup_{\sigma \in \Xi }\left\Vert \Lambda \bar{A}+\left[ 
\begin{array}{cc}
Z^{\sigma } & Q^{\sigma }%
\end{array}%
\right] \left[ 
\begin{array}{c}
\bar{C}^{\sigma } \\ 
\Lambda \bar{A}-I%
\end{array}%
\right] \right\Vert <\varepsilon .  \label{eq:cond03}
\end{eqnarray}%
In above expression, the dependency of $L^{\sigma }$, $Q^{\sigma },$ and $%
Z^{\sigma }$ on the switching signal is made explicit. The underlying
assumption here is that the state-estimator knows the strategy of the
attacker causally. That is, at each given time $t$, the state-estimator has
the knowledge $\left\{ \sigma \left( 0\right) ,\sigma \left( 1\right)
,...,\sigma \left( t\right) \right\} $, but does not know the attacker's
strategy in future. We want to find $Q^{\sigma }$ and $Z^{\sigma }$ such
that while (\ref{eq:cond03}) is satisfied the estimation error is minimized.
The error estimation is derived in the proof of Theorem \ref{thm:01} given by%
\begin{eqnarray}
&&e=  \notag \\
&&-\left\{ I-\mathcal{E}^{\sigma }\right\} ^{-1}\left\{ Z^{\sigma }\bar{D}%
^{\sigma }w+\left( I+Q^{\sigma }\right) \Lambda \bar{B}w+\left( I+Q^{\sigma
}\right) \bar{x}_{0}\right\} , \\
&=&-\left\{ I-\mathcal{E}^{\sigma }\right\} ^{-1}\times  \label{eq:08} \\
&&\left\{ \left[ 
\begin{array}{cc}
\Lambda \bar{B} & I%
\end{array}%
\right] +\left[ 
\begin{array}{cc}
Z^{\sigma } & Q^{\sigma }%
\end{array}%
\right] \left[ 
\begin{array}{cc}
\bar{D}^{\sigma } & 0 \\ 
\Lambda \bar{B} & I%
\end{array}%
\right] \right\} \left[ 
\begin{array}{c}
w \\ 
\bar{x}_{0}%
\end{array}%
\right] .
\end{eqnarray}%
where%
\begin{equation*}
\mathcal{E}^{\sigma }\mathcal{=}\Lambda \bar{A}+\left[ 
\begin{array}{cc}
Z^{\sigma } & Q^{\sigma }%
\end{array}%
\right] \left[ 
\begin{array}{c}
\bar{C}^{\sigma } \\ 
\Lambda \bar{A}-I%
\end{array}%
\right] .
\end{equation*}

\begin{theorem}
There exists a stable state-estimator such that the induced norm from $\left[
\begin{array}{c}
w \\ 
\bar{x}_{0}%
\end{array}%
\right] $ to the estimation error $e$ is less than some positive real number 
$\gamma $ if and only if there exists stable operators $Q^{\sigma }$ and $%
Z^{\sigma }$ such that%
\begin{eqnarray}
\left\Vert \left[ 
\begin{array}{cc}
\Lambda \bar{B} & I%
\end{array}%
\right] +\left[ 
\begin{array}{cc}
Z^{\sigma } & Q^{\sigma }%
\end{array}%
\right] \left[ 
\begin{array}{cc}
\bar{D}^{\sigma } & 0 \\ 
\Lambda \bar{B} & I%
\end{array}%
\right] \right\Vert &\leq &\gamma ,  \label{eq:10} \\
\Lambda \bar{A}+\left[ 
\begin{array}{cc}
Z^{\sigma } & Q^{\sigma }%
\end{array}%
\right] \left[ 
\begin{array}{c}
\bar{C}^{\sigma } \\ 
\Lambda \bar{A}-I%
\end{array}%
\right] &=&0,  \label{eq:09}
\end{eqnarray}%
for any $\sigma \in \Xi $. In this case, the optimal cost is defined by%
\begin{equation}
\gamma ^{\ast }=\inf_{\left( \gamma ,Q^{\sigma },Z^{\sigma }\right)
}\sup_{\sigma \in \Xi }\gamma ,  \label{eq:20}
\end{equation}%
subject to (\ref{eq:10})-(\ref{eq:09}).
\end{theorem}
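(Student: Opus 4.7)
The plan is to reduce the theorem to a direct corollary of Theorem \ref{thm:01} together with the error expression (\ref{eq:08}) derived in its proof. The crucial observation is that the equality constraint (\ref{eq:09}) is exactly the constraint (\ref{eq:cond1}) of Theorem \ref{thm:01}, i.e., $\mathcal{E}^{\sigma}=0$ for every $\sigma\in\Xi$. Under this condition the prefactor $\left(I-\mathcal{E}^{\sigma}\right)^{-1}$ in (\ref{eq:08}) collapses to the identity, so the error reduces to the affine-in-$(Q^{\sigma},Z^{\sigma})$ operator whose induced norm appears on the left-hand side of (\ref{eq:10}).

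For necessity, I would start with an arbitrary stable state-estimator whose induced norm from $\left[\begin{array}{c}w\\ \bar{x}_{0}\end{array}\right]$ to $e$ is at most $\gamma$. Applying Theorem \ref{thm:01} produces stable operators $Q^{\sigma}, Z^{\sigma}$ satisfying (\ref{eq:cond1})---equivalently (\ref{eq:09})---for which the estimator is realized as a generalized Luenberger observer with gain $L^{\sigma}=\left(I+Q^{\sigma}\right)^{-1}Z^{\sigma}$. Substituting $\mathcal{E}^{\sigma}=0$ into (\ref{eq:08}) shows that the induced norm from the disturbance/initial-condition pair to $e$ coincides with the quantity on the left-hand side of (\ref{eq:10}), and the assumed bound immediately yields (\ref{eq:10}).

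For sufficiency, I would take any stable operators $Q^{\sigma}, Z^{\sigma}$ satisfying (\ref{eq:09})--(\ref{eq:10}), set $L^{\sigma}=\left(I+Q^{\sigma}\right)^{-1}Z^{\sigma}$, and form the observer (\ref{eq:GLO}). Since (\ref{eq:09}) gives $\mathcal{E}^{\sigma}=0$, the hypothesis $\sup_{\sigma}\left\Vert \mathcal{E}^{\sigma}\right\Vert <1$ from (\ref{eq:cond2}) is trivially satisfied, so the converse half of Theorem \ref{thm:01} applies and the observer is a stable state-estimator. The same substitution in (\ref{eq:08}) shows that the induced norm of $e$ equals the left-hand side of (\ref{eq:10}), and is therefore bounded by $\gamma$. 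The optimal-cost assertion (\ref{eq:20}) is then an immediate consequence of minimizing $\gamma$ over the resulting feasible set of triples $(\gamma, Q^{\sigma}, Z^{\sigma})$.

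The one subtlety I expect to be the main obstacle is the well-posedness of $L^{\sigma}=\left(I+Q^{\sigma}\right)^{-1}Z^{\sigma}$: invertibility of $I+Q^{\sigma}$ is needed for the observer gain to exist. In the necessity direction this is automatic from the construction in Theorem \ref{thm:01}'s proof, where $I+Q^{\sigma}=-X^{\sigma}$ inherits its invertibility from the parametrization of Lemma \ref{lem:01}. In the sufficiency direction it is cleanest to bypass the observer form entirely and regard (\ref{eq:09})--(\ref{eq:10}) as directly constraining the linear map from $\left[\begin{array}{c}w\\ \bar{x}_{0}\end{array}\right]$ to $e$---that map is what governs the induced norm, so the invertibility issue never needs to be resolved explicitly. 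Organizing the write-up around this viewpoint also makes the convexity of the feasible set and the linearity of the objective transparent, paving the way for the linear-programming reduction claimed in the abstract.
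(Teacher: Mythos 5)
Your proposal is correct and follows essentially the same route as the paper: invoke the parametrization of Theorem \ref{thm:01}, identify (\ref{eq:09}) with (\ref{eq:cond1}) so that $\mathcal{E}^{\sigma}=0$ and the prefactor in (\ref{eq:08}) becomes the identity, and read the induced norm off the remaining affine expression, which is exactly (\ref{eq:10}). The paper's own proof is just a terser version of this; your added care about the two directions and about sidestepping the invertibility of $I+Q^{\sigma}$ (which, note, Lemma \ref{lem:01} does not actually guarantee) only makes the argument more complete.
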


\begin{proof}
From Theorem \ref{thm:01}, the set of all stable state-estimator is
parametrized by $\left( Q^{\sigma },Z^{\sigma }\right) $ such that (\ref%
{eq:L}) and (\ref{eq:cond1}) hold. We notice that (\ref{eq:cond1}) is the
same as (\ref{eq:09}) and , from (\ref{eq:07}), the induced norm from $\left[
\begin{array}{c}
w \\ 
\bar{x}_{0}%
\end{array}%
\right] $ to $e$, when (\ref{eq:09}) is satisfied is given by 
\begin{equation*}
\left\Vert \left[ 
\begin{array}{cc}
\Lambda \bar{B} & I%
\end{array}%
\right] +\left[ 
\begin{array}{cc}
Z^{\sigma } & Q^{\sigma }%
\end{array}%
\right] \left[ 
\begin{array}{cc}
\bar{D}^{\sigma } & 0 \\ 
\Lambda \bar{B} & I%
\end{array}%
\right] \right\Vert .
\end{equation*}%
Therefore, the induced norm from $\left[ 
\begin{array}{c}
w \\ 
\bar{x}_{0}%
\end{array}%
\right] $ to the estimation error $e$ is less than $\gamma $ if (\ref{eq:10}%
) holds.
\end{proof}

We note that searching over stable systems $Q^{\sigma }$ and $Z^{\sigma }$
such that (\ref{eq:10})-(\ref{eq:09}) hold is a convex optimization but
infinite dimensional optimization. In what follows, we will reduce (\ref%
{eq:10})-(\ref{eq:09}) to finite dimensional convex optimization at the cost
of finding sub-optimal (but arbitrarily close to optimal) solutions. To this
end, according to Lemma \ref{lem:02}, since $\left( Q^{\sigma },Z^{\sigma
}\right) $ is stable, one can approximate them by FIR input-output switching
systems. In doing so, in general, it becomes challenging to satisfy (\ref%
{eq:09}) exactly and hence we need to relax (\ref{eq:09}). The result is
summarized in the following:

\begin{theorem}
\label{thm:02}Suppose there exist $0\leq \bar{\varepsilon}<1$ and FIR
input-output switching systems $Q^{\sigma },Z^{\sigma }\in \mathcal{S}%
_{IO}^{M}$ of some degree $M$ such that%
\begin{eqnarray}
\left\Vert \left[ 
\begin{array}{cc}
\Lambda \bar{B} & I%
\end{array}%
\right] +\left[ 
\begin{array}{cc}
Z^{\sigma } & Q^{\sigma }%
\end{array}%
\right] \left[ 
\begin{array}{cc}
\bar{D}^{\sigma } & 0 \\ 
\Lambda \bar{B} & I%
\end{array}%
\right] \right\Vert &\leq &\bar{\gamma},  \label{eq:21} \\
\left\Vert \Lambda \bar{A}+\left[ 
\begin{array}{cc}
Z^{\sigma } & Q^{\sigma }%
\end{array}%
\right] \left[ 
\begin{array}{c}
\bar{C}^{\sigma } \\ 
\Lambda \bar{A}-I%
\end{array}%
\right] \right\Vert &<&\bar{\varepsilon}.  \label{eq:22}
\end{eqnarray}%
Then the optimal cost $\gamma ^{\ast }$ satisfies%
\begin{equation*}
\gamma ^{\ast }\leq \bar{\gamma}+\frac{\bar{\varepsilon}}{1-\bar{\varepsilon}%
}\bar{\gamma}.
\end{equation*}
\end{theorem}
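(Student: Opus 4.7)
The plan is to interpret the relaxed solution $(Q^{\sigma},Z^{\sigma})$ as a bounded generalized Luenberger observer whose estimation error is already close to optimal, and then reparametrize that same observer in the tight form required by (\ref{eq:09}). The whole proof should reduce to a Neumann-series estimate plus an appeal to the two directions of Theorem~\ref{thm:01}.

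First, I would apply the converse direction of Theorem~\ref{thm:01}: since $\bar{\varepsilon}<1$, the hypothesis (\ref{eq:22}) is a strengthening of (\ref{eq:cond2}), so the observer $\hat{x}=\Lambda\bar{A}\hat{x}+L^{\sigma}(\bar{C}^{\sigma}\hat{x}-y_{a}^{\sigma})$ with $L^{\sigma}=(I+Q^{\sigma})^{-1}Z^{\sigma}$ is stable, and its error is exactly the one derived in (\ref{eq:07})--(\ref{eq:08}). Writing $\mathcal{E}^{\sigma}$ for the expression in (\ref{eq:06}), the bound (\ref{eq:22}) gives $\|\mathcal{E}^{\sigma}\|<\bar{\varepsilon}$, whence $\|(I-\mathcal{E}^{\sigma})^{-1}\|\le 1/(1-\bar{\varepsilon})$ by the geometric series. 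Combining this with (\ref{eq:21}) inside (\ref{eq:08}) yields the norm estimate
\[
\|e\|\;\le\;\frac{\bar{\gamma}}{1-\bar{\varepsilon}}\,\left\|\begin{bmatrix} w\\ \bar{x}_{0}\end{bmatrix}\right\|\;=\;\left(\bar{\gamma}+\frac{\bar{\varepsilon}}{1-\bar{\varepsilon}}\bar{\gamma}\right)\left\|\begin{bmatrix} w\\ \bar{x}_{0}\end{bmatrix}\right\|,
\]
so the $(w,\bar{x}_{0})\mapsto e$ induced norm of this observer is at most $\bar{\gamma}/(1-\bar{\varepsilon})$.

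Next, because the observer is a stable state-estimator, the forward direction of Theorem~\ref{thm:01} (equivalently Lemma~\ref{lem:01}) produces bounded operators $(\tilde{Q}^{\sigma},\tilde{Z}^{\sigma})$ representing the \emph{same} observer and satisfying the equality (\ref{eq:09}) exactly. Plugging $\tilde{\mathcal{E}}^{\sigma}=0$ into the general error formula (\ref{eq:08}) then shows that the induced norm of $e$ under this tight parametrization equals the left-hand side of (\ref{eq:10}). Since the observer and its error are unchanged by the reparametrization, the bound from the first step carries over: $(\tilde{Q}^{\sigma},\tilde{Z}^{\sigma})$ is feasible for the optimization (\ref{eq:20}) with $\gamma=\bar{\gamma}/(1-\bar{\varepsilon})$, and hence $\gamma^{\ast}\le \bar{\gamma}+\frac{\bar{\varepsilon}}{1-\bar{\varepsilon}}\bar{\gamma}$.

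The only subtle point, and the main obstacle, is justifying that $(\tilde{Q}^{\sigma},\tilde{Z}^{\sigma})$ really does parametrize the same observer, so that the error appearing in the tight formula agrees with the one I bounded from the relaxed formula. This follows from the explicit construction inside the proof of Theorem~\ref{thm:01}: letting $T^{\sigma}$ be the input-output map of the observer and $X^{\sigma}=(T^{\sigma}\bar{C}^{\sigma}-I)(I-\Lambda\bar{A})^{-1}$ as in Lemma~\ref{lem:01}, the choice $\tilde{Z}^{\sigma}=-T^{\sigma}$, $\tilde{Q}^{\sigma}=-I-X^{\sigma}$ both satisfies (\ref{eq:09}) and, as a short direct calculation confirms, reproduces the actual error $\hat{x}-x=X^{\sigma}(\Lambda\bar{B}w+\bar{x}_{0})+T^{\sigma}\bar{D}^{\sigma}w$ via (\ref{eq:08}) with $\tilde{\mathcal{E}}^{\sigma}=0$. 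Note that $(\tilde{Q}^{\sigma},\tilde{Z}^{\sigma})$ need not be FIR; feasibility in (\ref{eq:20}) only requires boundedness, which is automatic. Once this identification is in place, the rest of the argument is just the Neumann-series bound above.
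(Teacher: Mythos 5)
Your proof is correct and follows essentially the same route as the paper, whose entire argument is the Neumann-series bound $\Vert (I-\mathcal{E}^{\sigma})^{-1}\Vert \le 1/(1-\bar{\varepsilon})$ applied to the error dynamics (\ref{eq:08}) together with (\ref{eq:21}), giving $\Vert e\Vert \le \bar{\gamma}/(1-\bar{\varepsilon}) = \bar{\gamma}+\frac{\bar{\varepsilon}}{1-\bar{\varepsilon}}\bar{\gamma}$. Your additional reparametrization step, producing $(\tilde{Q}^{\sigma},\tilde{Z}^{\sigma})$ that satisfy (\ref{eq:09}) exactly so as to exhibit a feasible point of the optimization (\ref{eq:20}), is a detail the paper leaves implicit (it declares the result ``immediate''), and it correctly closes the gap between bounding the achievable estimation error and bounding $\gamma^{\ast}$ as defined by the constrained program.
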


\begin{proof}
Immediate from the error dynamics given in (\ref{eq:08}).
\end{proof}

We emphasize that (\ref{eq:21})-(\ref{eq:22}) are in the so-called
model-matching form and they can be solved using the methods developed in 
\cite{naghnaeian2018l_p} with arbitrary accuracy.

\section{Illustrative Example}

In this section, we derive the optimal state-estimator for problem outlined
in Example \ref{exm:01}. We use Theorem \ref{thm:02} as basis of our
computations. The parameter values are given in Example \ref{exm:01} and the
attackers strategy can cause switches in the C- and D-matrices as given by (%
\ref{eq:C})-(\ref{eq:D}). First, we will find the optimal state-estimator
for the nominal case, i.e., when $\sigma \left( .\right) $ is constant and
identically equal to $0$. In this case, the optimal cost is $\gamma ^{\ast
}=5.0275$ and the state-estimator is given by 
\begin{equation*}
\hat{x}=Ty_{a}^{0},
\end{equation*}%
where the impulse response of $T=\left\{ T\left( k\right) \right\}
_{k=0}^{\infty }$ is given by%
\begin{eqnarray*}
T\left( 0\right)  &=&\left[ 
\begin{array}{cc}
0 & 0.75 \\ 
0.74 & -0.065 \\ 
-0.126 & -0.031%
\end{array}%
\right] , \\
T\left( 1\right)  &=&\left[ 
\begin{array}{cc}
-1.25 & -2 \\ 
0.195 & 0 \\ 
-0.906 & -1%
\end{array}%
\right] , \\
T\left( k\right)  &=&0,\text{ for }k\geq 2\text{.}
\end{eqnarray*}%
This state-estimator, however, does not result in a stable approximation
error in the presence of DoS attack. One can use the method developed in
this paper to find a stable state-estimator that is resilient to DoS attack
strategy. In this example, we apply Theorem \ref{thm:02} and search for
input-output switching $\left( Q^{\sigma },Z^{\sigma }\right) $ of degree $1$%
. Furthermore, we let $\Xi $ to be the set of all binary sequences. For this
case, we manage to find a stable state-estimator with optimal cost of $%
\gamma ^{\ast }=32.5$. The optimal state-estimator is given by $\hat{x}%
=T^{\sigma }y_{a}^{\sigma }$ where $T^{\sigma }$ is an output-only switching
system of degree one. At each time instant $t$, 
\begin{equation*}
\hat{x}\left( t\right) =\sum_{\tau =t-4}^{t}T^{\sigma \left( t\right)
}\left( t-\tau \right) y_{a}^{\sigma \left( \tau \right) }\left( \tau
\right) ,
\end{equation*}%
where%
\begin{equation*}
T^{i}=\left\{ T^{i}\left( k\right) \right\} _{k=0}^{4},\text{ for }i=1,2%
\text{,}
\end{equation*}%
is the FIR impulse response of $T^{i}$. The numerical values for the impulse
response terms of $T^{1}$ are%
\begin{eqnarray*}
T^{1}\left( 0\right)  &=&\left[ 
\begin{array}{cc}
0.24 & 0.44 \\ 
0.37 & -0.17 \\ 
0.06 & -0.17%
\end{array}%
\right] ,T^{1}\left( 1\right) =\left[ 
\begin{array}{cc}
0.25 & 0 \\ 
0.39 & 0 \\ 
0.94 & 0%
\end{array}%
\right] , \\
T^{1}\left( 2\right)  &=&\left[ 
\begin{array}{cc}
0.02 & 0 \\ 
0.14 & 0 \\ 
0.07 & 0%
\end{array}%
\right] ,T^{1}\left( 3\right) =\left[ 
\begin{array}{cc}
-0.21 & 0 \\ 
0.01 & 0 \\ 
-0.3 & 0%
\end{array}%
\right] , \\
T^{1}\left( 4\right)  &=&\left[ 
\begin{array}{cc}
-2.1 & 0 \\ 
-0.06 & 0 \\ 
-0.93 & 0%
\end{array}%
\right] 
\end{eqnarray*}%
And the impulse reponse of $T^{2}$ is given by%
\begin{eqnarray*}
T^{2}\left( 0\right)  &=&\left[ 
\begin{array}{cc}
-1.5 & 0 \\ 
0.98 & 0 \\ 
0.66 & 0%
\end{array}%
\right] ,T^{2}\left( 1\right) =\left[ 
\begin{array}{cc}
3.75 & 0 \\ 
0.07 & 0 \\ 
0.61 & 0%
\end{array}%
\right] , \\
T^{2}\left( 2\right)  &=&\left[ 
\begin{array}{cc}
0 & 0 \\ 
-0.06 & 0 \\ 
-0.05 & 0%
\end{array}%
\right] ,T^{2}\left( 3\right) =\left[ 
\begin{array}{cc}
0 & 0 \\ 
-0.03 & 0 \\ 
-0.45 & 0%
\end{array}%
\right] , \\
T^{2}\left( 4\right)  &=&\left[ 
\begin{array}{cc}
-2.25 & 0 \\ 
0.05 & 0 \\ 
-0.77 & 0%
\end{array}%
\right] .
\end{eqnarray*}

\bigskip

\section{Conclusion}

In this paper, utilizing the operator framework, we first parametrized the
set of all stable state-estimators resilient to DoS attack. This was carried
out by converting the problem to a state estimation problem for linear
switched systems where the attacker's strategy prescribes the switching law.
Furthermore, we showed that the set of generalized Luenberger observers
captures all stable state-estimators. Then, we cast the problem of finding
the optimal estimator as a convex optimization over the set of stable
factors of the observer operator-gain. This optimization, for the $l_{\infty}
$ induced norm, can be rewritten as a linear program which can be solved
efficiently.

\bibliographystyle{IEEEtran}
\bibliography{BIB_CDC2019}

\end{document}